\documentclass{article}
\usepackage{amsmath,amssymb}
\usepackage{graphicx,color}

\newtheorem{theorem}{Theorem}[section]

\begin{document}

\title{Approximation of solutions of parameter-dependent problems by residual 
neural networks}
\author{Ana Carpio,   \\ 
Universidad Complutense de Madrid, Spain}

\maketitle

{\bf Abstract.}
We develop a convergent scheme to train neural networks involving
analytic activation functions based on gradient flows. Convergence
properties are guaranteed by Lojasiewicz theory. The main advantage of this
approach is its simplicity of implementation. The coefficients of the network are
approximated by solving a system of ordinary differential equations. We test
the method by constructing residual neural network approximations of solutions of
parametric problems. The dependence of the solutions of simple ordinary differential 
equations on a few parameters is correctly reproduced. The solutions of inverse
problems involving wave constraints which depend on a few parameters can be
reasonably approximated, even in regions in which the problem is severely ill
posed. \\

{\bf Keywords.}
Residual neural networks, approximation theory, parametric problems, 
differential equations, inverse problems.  \\

{\bf MSCcodes.}
68Q32, 65D15, 65K10.   \\

\section{Introduction}
\label{sec:intro}

A wide variety of problems in applied sciences require solving
large amounts of parameter-dependent ordinary or partial
differential equations.  For instance, technological designs 
often rely on large scale simulations
\cite{springer_texts,largescale,chapman_texts} to test the 
response to variations of relevant parameters.
Likewise, optimization and Bayesian approaches 
to inverse problems typically involve solving differential 
constraints for huge amounts of parameter proposals 
\cite{sergei,omar,sullivan}.
Sometimes, the full solution is not really needed for these
applications, quantities of interest calculated from them
are used in practice instead. 

Over the years, a broad range of numerical schemes have been
developed to approximate the solutions of models formulated 
in terms of differential equations with a controlled error
\cite{leveque,quarteroni}. The increase in computer speed, 
combined with high performance  computing tools, allows us to 
handle a growing variety of  complex problems employing these 
methods for given parameter choices
\cite{springer_texts,largescale,chapman_texts}.
Whenever we vary a parameter, we must face the computational
cost of solving the problem again. In that respect, the development 
of approximation procedures which are able to provide reasonable 
solutions for multiple parameter choices at low cost would constitute 
an important breakthrough. 
Machine learning techniques, in particular neural network (NN) 
based approximations \cite{book},  have raised some 
expectations in that respect.  
We will consider here their potential for the approximation of solutions
of multi-parametric problems and inverse problems.

Universal approximation theorems establish the density of
several NN architectures within given spaces of interest
\cite{cybenko, gripenberg, hornik, kidger, continuous, lu, 
maiorov, park, pinkus, tabuada}. Quantitative refinements
discuss the number of layers/units to achieve a given error
for different architectures, choices of activation function
and target regularity
\cite{heinecke, johnson, kratsios, lin, lu2, poggio, shen}.
Most results refer to continuous or integrable functions 
between finite dimensional Euclidean spaces with either
the topology of uniform convergence  or $L^p$ norms.
Given an element of the space, they state that it is possible 
to find weights for the selected type of NN architecture in 
such a way that the error in the selected norm is as small 
as wished provided the number of layers or nodes is large
enough, or specific activation functions are selected.
However, they rarely provide constructions of such 
approximants. Failure in applications is attributable
to a deficient  selection of weights,  number of hidden units
and number of layers, or to the appearance of a stochastic 
rather than a deterministic  connection between input and 
target. 

Once a network architecture with a definite number
of layers and units per layer is chosen, the free parameters
are usually found by minimizing the error
between the network predictions and the available data
for a set of inputs \cite{book}. Success of standard
approaches, such as gradient descent or Newton methods,
typically requires that the cost has isolated critical points,
some convexity assumptions or nondegeneracy of the
hessian. In this paper, we explore a different strategy
which does not require such conditions and works for
analytic neural network approximations. We use the cost to 
define a gradient flow which converges to a well defined limit 
point \cite{absil, harauxloja, gradientloja}
and provides the sough values for the network weights.
We test the procedure on residual neural networks for function
approximation. Given a continuous function between
finite dimensional Euclidean spaces, universal approximation 
theorems establish the existence of deep enough residual 
neural networks for which the error is as small as needed 
\cite{continuous, lin, tabuada}. 
The solutions of differential equations depend continuously on 
model parameters in many situations. We will illustrate the
potential of the method approximating simple parametric
solutions and inverting operators defined by means of differential
equations

For ease of the reader, Section \ref{sec:approximation} recalls
the approximation set-up and fixes terminology and notations.
Section \ref{sec:analytic} discusses the practical construction of 
approximants from sample data when the network is analytic. 
Instead of standard optimization procedures, we resort to gradient 
flows to calculate the network free coefficients. Section \ref{sec:results}
tests the procedure on parameter-dependent examples: a simple 
ordinary differential equation and an inversion problem defined by
a wave equation.
Section \ref{sec:latin} discusses the possibility of reducing the
amount of data by replacing uniform grids in the parameter space
by latin hypercube samples.
Finally, section \ref{sec:conclusions} contains our conclusions.

\section{Approximation of functions by neural networks}
\label{sec:approximation}

Consider a set of inputs ${\cal X} \subset \mathbb R^N$
and a set of outputs  ${\cal Y} \subset \mathbb R^M$. We wish
to approximate some hidden relation represented by an unknown
target function $F: {\cal X}  \rightarrow {\cal Y}$.
The neural network (NN) approach   constructs 
approximants $\hat F$ by a sequence of transformations. 
Given an input 
$\mathbf x \in \mathbb R^N$ we build successive layers for $\ell=1, 
\ldots, L$  by
\begin{itemize}
\item making $n_\ell$ linear combinations
\[ \tilde x_j^{(\ell)} = \sum_{i=1}^{n_{\ell-1}} w_{j,i}^{(\ell)} x_i^{(\ell-1)} 
+ b_j^{(\ell)}, \quad j=1, \ldots, n_\ell, \]
\item applying an activation function $\sigma_\ell$ 
to obtain
\[ x_j^{(\ell)} = \sigma_\ell(\tilde x_j^{(\ell)}), \quad j=1, \ldots, n_\ell,\]
\end{itemize}
where $w_{j,i}^{(\ell)} $ and $b_j^{(\ell)}$ are weights and biases to
be selected. We set  $n_0=N$, $\mathbf x^{(0)} =\mathbf x$,
$n_L=M$, and $\mathbf x^{(L)} =\mathbf y$.
In this way, we define a set of approximant functions ${\cal H}$ 
\begin{eqnarray*}
{\cal H} = \{ F_{\boldsymbol \gamma}: {\cal X} \rightarrow {\cal Y},
\quad \boldsymbol \gamma \in \Gamma\}
\end{eqnarray*}
depending on parameter sets $\boldsymbol \gamma$ formed
by all the involved weights and biases. 
The intermediate layers $\mathbf x^{(\ell)}$ for $\ell=1,\ldots,L-1$ are  
called hidden units, whereas $\mathbf x^{(L)}$ is the output layer. 
Usually, the activation functions $\sigma_\ell = \sigma$ for $\ell=1,
\ldots, L-1$, whereas $\sigma_L=g$ can differ. Depending on the 
application,  linear functions, sigmoids, hyperbolic tangents, softplus, 
softmax, rectifier linear units, and so on, are possible choices 
\cite{book}. Variations in this structure give raise to particular NN 
families. We focus here on a remarkably successful architecture: 
deep residual neural networks (ResNets). These networks are built
by means of the recurrence
\begin{eqnarray} \begin{array}{l}
\mathbf x^{(\ell)} = \mathbf x^{(\ell-1)} + \mathbf V^{(\ell)}
\sigma(\mathbf W^{(\ell)} \mathbf x^{(\ell-1)} + \mathbf b^{(\ell)}),
\quad \ell = 1, \ldots, L-1, \\
\mathbf x^{(L)} = g(\mathbf x^{(L-1)} ),
\end{array} \label{resnet}\end{eqnarray}
starting from input values $\mathbf x^{(0)}=\mathbf x$ and applying 
$\sigma$  and $g$ elementwise. For fully connected layers
\[
\mathbf W^{(\ell)} \in {\cal M}_{q \times n}(\mathbb R),  \;
\mathbf V^{(\ell)} \in {\cal M}_{n \times q}(\mathbb R),  \;
\mathbf b^{(\ell)} \in \mathbb R^q.
\]
Universal approximation results for ResNets
are available under different conditions on the terminal
functions $g: \mathbb R^N \rightarrow \mathbb R^M$ and the
intermediate control functions $f_{\boldsymbol \gamma_\ell}: 
\mathbb R^N \rightarrow \mathbb R^N$, given here by
\[ f_{\boldsymbol \gamma_\ell}(\mathbf z) = \mathbf V^{(\ell)}
(\mathbf W^{(\ell)} \mathbf z + \mathbf b^{(\ell)}),\quad
\boldsymbol \gamma_\ell = (\mathbf V^{(\ell)}, \mathbf W^{(\ell)}, 
\mathbf b^{(\ell)}).\]
%
%
If  $N \geq 2$, $q \leq N$ and $K \subset \mathbb R^N$ is compact,
we can approximate continuous functions 
$F: K  \rightarrow \mathbb R^M$ in $L^p$ norm, $1 \leq p <\infty$,
provided the control family ${\cal F}$ and the
terminal family ${\cal G}$ satisfy
\begin{itemize}
\item[1)] For any compact $K \subset \mathbb R^N$, there exists
a Lipschitz function $g \in {\cal G}$ such that $F(K) \subset
g(\mathbb R^N)$.
\item[2)] The closure $\overline{CH}({\cal F})$ of the convex hull of 
${\cal F}$ contains a well function $h$, that is, a function whose 
components $h_i:  \mathbb R^N \rightarrow \mathbb R$, $i=1,
\ldots, N$ are Lipchitz functions satisfying
$\{\mathbf x \in \mathbb R^N \, | \, h_i(\mathbf x) = 0 \}=Z_i$ is the
closure of an open convex set $\Omega_i \subset \mathbb R^N$,
$ Z_i = \overline{\Omega_i}$.
\item[3)] ${\cal F}$ is restricted affine invariant, that is, for any
$f \in {\cal F}$, $Df(A \cdot + b) \in {\cal F}$ for $b \in \mathbb R^N$,
$A$, $D$ diagonal matrices $N \times N$,  such that
the entries of $D$ are $\pm 1$, $0$ and the entries of $A$ are
smaller than or equal to $1$ in absolute value.
\end{itemize}
See \cite{continuous} for details. Condition 1) is satisfied by any 
Lipschitz and surjective function $g$.  
Conditions 2) and 3) are fulfilled by activation functions $\sigma$
commonly  used in the design of neural networks, such as sigmoids 
or ReLU functions. When the activation function $\sigma$, or one
of its derivatives, satisfies a quadratic differential equation, any 
continuous function $F: \mathbb R^N \rightarrow \mathbb R^N$
can be approximated arbitrarily well, on a compact set and with 
respect to the supremum norm, by deep enough ResNets with
$N+1$ units per layer \cite{tabuada}. Many popular activation functions 
satisfy this condition:
\begin{itemize}
\item the logistic sigmoid $\sigma(z) = 1/(1 + e^{-z})$, 
\item the hyperbolic tangent $\sigma(z) = \tanh(z)$, 
\item the soft plus function $\sigma(z) = \log(1+e^{rz})/r$, $r>0$
(a smooth approximation to the ReLU function $\sigma(z) = {\rm max}(z,0)$ 
as $r \rightarrow \infty$), 
\item the leaky soft plus function $\sigma(z) = rx+ \log(1+e^{(1-r)kz})/k$,
 $r,k>0$ (a smooth approximation to the leaky ReLU function 
 $\sigma(z) = z$ if $z\geq 0$ and $\sigma(z) = rz$, if $z<0$, 
as $k \rightarrow \infty$).
\end{itemize}
These results, however, do not provide algorithms to construct 
the desired approximants.

\section{Construction of approximants}
\label{sec:analytic}

In practice, given a target function $F$, we can construct
neural network approximants by solving an optimization 
problem. Let ${\cal H}$ denote the set of approximants 
${\cal H} = \{ F_{\boldsymbol \gamma}: {\cal X} \rightarrow {\cal Y},
\; \boldsymbol \gamma \in \Gamma\}$
depending on parameter sets $\boldsymbol \gamma$. 
When we have data samples 
$F(\mathbf x_j) = \mathbf y_j$, $j=1,\ldots, J$, we solve
\begin{eqnarray}
{\rm min}_{\boldsymbol \gamma \in \Gamma} \; E(\boldsymbol \gamma)
= {\rm min}_{\boldsymbol \gamma \in \Gamma} \; {1\over J} 
\sum_{j=1}^J \|F(\mathbf x_j) - F_{\boldsymbol \gamma}(\mathbf x_j)\|^2.
\label{energy}
\end{eqnarray}
Depending on the application, one can replace the square loss by a 
different loss function \cite{book}. We are interested in a global 
minimum. When several local minima appear, we choose the one
with smaller cost. Standard approaches implement variants of gradient 
descent or Newton type techniques \cite{book,optimization}. The calculation 
of derivatives and hessians exploits backpropagation techniques \cite{lecun}.
Success of these approaches typically requires that the cost has isolated 
critical points, some convexity assumptions or nondegeneracy of the
hessian, which may force the addition of regularizing terms.

We propose here a different strategy, specially useful when the neural 
network architecture, more precisely, the activation functions, are analytic.
This is the case of sigmoidal functions, hyperbolic tangents and soft
plus activations, for instance.
Let $\boldsymbol \gamma$ be the vector formed by all the parameters 
to be identified by minimizing the cost $E(\boldsymbol \gamma)$.  
Local minima are characterized as solutions of 
$ \nabla E(\boldsymbol \gamma) = 0$. Instead of resorting to optimization
techniques, we take a dynamical systems viewpoint and consider a gradient 
system with an artificial time $t$ and  let the system relax to an equilibrium, 
the so called relaxation approach. When the gradient system is analytic,
convergence is guaranteed thanks to Lojasiewicz type inequalities.

\subsection{Lojasiewicz inequalities}
\label{sec:loja}

Consider the gradient system 
\begin{eqnarray}
\boldsymbol \gamma_t + \nabla E(\boldsymbol \gamma) = 0
\label{gradient}
\end{eqnarray}
where $E \in C^2(\mathbb R^P,\mathbb R)$.
If $\boldsymbol \gamma \in C^2([0,\infty),\mathbb R^P)$ is a 
solution, then
\[
{d \over dt}E(\boldsymbol \gamma) = 
< \nabla E(\boldsymbol \gamma), \boldsymbol \gamma' >
 = - \| \boldsymbol \gamma_t \|_2^2 \leq 0,
\]
so that $E(\boldsymbol \gamma)$ is a Liapunov function 
\cite{harauxbook} for the gradient system. If $\boldsymbol \gamma$ 
is bounded, then $\| \boldsymbol \gamma_t \|_2 \rightarrow 0$ as 
$t \rightarrow \infty$ and 
\[
{\rm lim}_{t \rightarrow \infty} {\rm dist}(\boldsymbol \gamma(t), 
{\cal E}) = 0,
\]
where we denote the set the equilibrium points by 
${\cal E} = \{ \mathbf a \in \mathbb R^P \, | \, \nabla E(\mathbf a) = 0 \}$.
This set may have an arbitrary structure. It may be  formed by a 
single point, a collection of isolated points, a continuum. In general,
Convergence to the set of equilibrium points does not imply
convergence to a particular equilibrium point. A remarkable
exception occurs when $E$ is analytic:
when $P \geq 2$, we can find $\mathbf a \in {\cal E}$ such that
\[
{\rm lim}_{t\rightarrow \infty} \boldsymbol \gamma(t) = \mathbf a,
\]
provided $E$ is analytic \cite{harauxloja, loja3}. This result is not
necessarily true even if $E \in C^{\infty}$. 
It relies on  Lojasiewicz inequality \cite{loja1,loja2}.
\vskip 1mm

\begin{theorem} (Lojasiewicz inequality)
Let $U \subset \mathbb R^P$ an open set and
$E: U \rightarrow \mathbb R$ an analytic function.
For any $\mathbf a \in {\cal E}$ there exists a neighbourhood 
$\omega \subset U$ of $a$ and a real number 
$\alpha \in (1/2, 1)$ such that
\begin{eqnarray*}
| E(\mathbf u) - E(\mathbf a)|^\alpha \leq ||\nabla E(\mathbf u)||_2, \;
\forall \mathbf u \in \omega.  
\end{eqnarray*}
\end{theorem}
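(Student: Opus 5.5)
The plan is to follow Łojasiewicz's original route: first reduce to a normalized situation, then prove a gradient inequality for analytic germs with an exponent $\theta \in (0,1)$, and finally adjust that exponent and the constant to the form stated.

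\textbf{Normalization.} Translate so that $\mathbf a = 0$ and replace $E$ by $E - E(\mathbf a)$. We may then assume $E(0)=0$ and $\nabla E(0)=0$, and it suffices to find $\omega$, $\theta\in(0,1)$ and $C>0$ with
\[
|E(\mathbf u)|^{\theta} \leq C\,\|\nabla E(\mathbf u)\|_2 \quad \text{for } \mathbf u \in \omega .
\]

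\textbf{Adjusting exponent and constant.} Once this is known, shrink $\omega$ so that $|E|<1$ on $\omega$. For any $\alpha\in[\theta,1)$ we then have $|E|^\alpha \le |E|^\theta$, so the exponent may be raised freely. The constant $C$ is removed by raising the exponent further. Write $|E|^{\alpha} = |E|^{\alpha-\theta}\,|E|^{\theta}$. On a small enough neighbourhood $|E|^{\alpha-\theta} \le 1/C$, because $E$ is continuous and vanishes at $0$. Hence $|E|^{\alpha} \le \|\nabla E\|_2$ there.

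Choosing $\alpha \in (\max(\theta,1/2),1)$ gives an exponent in $(1/2,1)$ as in the statement. Exponents in $(1/2,1)$ are always attainable, even though for nondegenerate minima the optimal exponent is $1/2$. If $E \equiv 0$ near $\mathbf a$ the inequality is trivial.

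\textbf{Core inequality.} Here I would invoke Łojasiewicz's theorem for real analytic functions. One convenient proof goes through the curve selection lemma for semianalytic sets.

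Define $m(r) = \min\{\|\nabla E(\mathbf u)\|_2 : |E(\mathbf u)| = r,\ \|\mathbf u\|\le \varepsilon\}$. I would show that $m(r) \ge c\, r^{\theta}$ for small $r$.

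Suppose this fails. The set where $\|\nabla E\|$ is small relative to $|E|$ is semianalytic, so curve selection provides a real analytic arc $\mathbf u(s)$ with $\mathbf u(0)=0$ and $E(\mathbf u(s)) \neq 0$ for $s>0$. Along this arc $\|\nabla E(\mathbf u(s))\|_2$ is smaller than any power of $|E(\mathbf u(s))|$.

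Along an analytic arc both $E(\mathbf u(s)) \sim a s^{k}$ and $\|\mathbf u'(s)\| \sim b s^{j}$ have Puiseux-type expansions. The chain rule then gives
\[
k|a| s^{k-1} \sim \bigl|\tfrac{d}{ds}E(\mathbf u(s))\bigr| \le \|\nabla E\|_2\,\|\mathbf u'\| .
\]
Hence $\|\nabla E\| \gtrsim s^{k-1-j}$, with $j \ge 0$. This yields a power bound $\|\nabla E\| \gtrsim |E|^{(k-1-j)/k}$ with exponent below $1$, which is a contradiction.

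Making the exponent uniform, rather than arc-dependent, uses the fact that $m(r)$ is a subanalytic function of one variable. Such a function has a Puiseux expansion $m(r) \sim c\,r^{\theta}$ as $r\to0^+$. The arc argument then shows $\theta<1$ and $c>0$.

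\textbf{Main obstacle.} The hard step is this last one: the curve selection lemma and the subanalytic (Puiseux) structure of $m$. These are deep results of semianalytic geometry. I would cite them (Łojasiewicz, or Bierstone–Milman) rather than reprove them.

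An alternative worth trying first would be resolution of singularities (Hironaka). After blowing up, $E$ becomes locally a monomial $\pm x_1^{k_1}\cdots x_P^{k_P}$ times a unit. For a monomial the inequality can be checked by direct computation, with $\theta = 1-1/\sum k_i$. The inequality then transfers back through the proper map, using compactness and the chain rule. Either way, this is the only step that is not routine.
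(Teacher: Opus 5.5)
Your plan is correct, but the paper gives no proof of this statement. It quotes the inequality from Lojasiewicz \cite{loja1,loja2} and uses it only as a black box for the convergence theorem.

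What you add is twofold:
\begin{itemize}
\item Your normalization step makes explicit a point the paper glosses over. The classical theorem gives $|E(\mathbf u)-E(\mathbf a)|^\theta\le C\|\nabla E(\mathbf u)\|_2$ with some $\theta<1$ and a constant $C$. The constant can be absorbed only by passing to a strictly larger exponent $\alpha>\theta$, using $|E-E(\mathbf a)|^{\alpha-\theta}\to0$. This is why the statement has the open range $(1/2,1)$, and why the paper must reintroduce the constant $M$ in (\ref{loja}) once it speaks of the optimal exponent.
\item Your outline of the core inequality is the standard proof and shows where analyticity enters. You give two routes: curve selection plus Puiseux expansions of one-variable subanalytic functions, or resolution of singularities with the explicit monomial bound $\theta=1-1/\sum k_i$ transferred through the proper map. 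Both cost citing deep results that the paper also does not reprove.
\end{itemize}

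One step in the subanalytic route needs tightening. Your first contradiction argument, with an arc along which $\|\nabla E\|_2$ is smaller than \emph{every} power of $|E|$, is not available. That condition is an infinite intersection, not a semianalytic set. For any single fixed $\theta$, the arc-wise bound $\|\nabla E\|_2\gtrsim|E|^{1-1/k}$ gives no contradiction, because $k$ depends on the arc.

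The argument that works is the one you sketch afterwards, made precise as follows.
\begin{itemize}
\item Since $m$ is subanalytic, either $m\equiv0$ near $0^+$ or $m(r)\sim c\,r^\theta$ with $c>0$.
\item The first alternative is excluded because, for $\varepsilon$ small, all critical points in the ball lie in $E^{-1}(0)$. This follows from curve selection on the critical set, since $E$ is constant along any analytic arc of critical points.
\item Then apply curve selection to the subanalytic set $\{\mathbf u:\ \|\mathbf u\|\le\varepsilon,\ 0<|E(\mathbf u)|<r_0,\ \|\nabla E(\mathbf u)\|_2=m(|E(\mathbf u)|)\}$. The arc must be chosen inside this minimizer set, not in a set defined by a fixed power.
\item The arc may start at a point of $E^{-1}(0)$ in the ball other than $0$; this is harmless.
\item Boundedness of $\mathbf u'$ and the chain rule give $s^{k-1}\lesssim m(|E(\mathbf u(s))|)\sim s^{k\theta}$, hence $\theta\le 1-1/k<1$.
\end{itemize}
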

This inequality has some immediate consequences:
\begin{itemize}
\item Given $\Gamma \subset {\cal E}$ compact and connected,
we have
\begin{itemize}
\item $E(\mathbf  a)=E(\mathbf  b)=c$, $\forall \mathbf  a, 
\mathbf  b \in \Gamma$,
\item There is a neighbourhood $\omega \subset U$ 
of $\Gamma$  and $\alpha \in (1/2, 1)$ such that
\[
| E(\mathbf  u) - c|^\alpha \leq ||\nabla E(u)||_2, \;
\forall \mathbf  u \in \omega.
\]
\end{itemize}
\item If satisfied for a value $\alpha$, it also holds for 
$\alpha' \in (\alpha,1)$. We would like to take $\alpha$
as small as possible. For the optimal $\alpha$, and additional
constant $M>0$ appears, that is,
\begin{eqnarray}
| E(\mathbf u) - c|^\alpha \leq M ||\nabla E(\mathbf u)||_2, \;
\forall \mathbf u \in \omega. \label{loja}
\end{eqnarray}
\end{itemize}
When a Lojasiewicz inequality holds, the gradient flow
converges to a specific equilibrium \cite{harauxloja, loja3}.
\vskip 1mm

\begin{theorem} (Convergence of the gradient flow) 
Let $U \subset \mathbb R^P$ an open set and 
$E \in C^2(U,\mathbb R)$. Let $\boldsymbol \gamma$ be 
a solution of (\ref{gradient}) such that $\boldsymbol \gamma(t) \in K$ 
for all $ t \geq 0$ for  some compact set $K \subset U$. Then,
\begin{eqnarray*}
\Gamma = \int_{\tau>0} \overline{\cup_{t \geq \tau} 
{\boldsymbol \gamma(t)}} \subset {\cal E}.
\end{eqnarray*}
Moreover, if (\ref{loja}) holds in a neighbourhood $\omega
\subset U$ of $\Gamma$ for some $c \in \mathbb R$
and $\alpha \in (1/2, 1)$, then there is $a \in {\cal E} \cap
E^{-1}(c)$ such that
\begin{eqnarray*}
\| \boldsymbol \gamma(t) - \mathbf a  \|_2 \leq C \, t^{-\eta(\alpha)}, 
\quad
\eta(\alpha) = {1- \alpha \over 2 \alpha -1}, \, \forall t>0, \\
| E(\boldsymbol \gamma(t)) - c| \leq C \, t^{-\beta(\alpha)}, \quad  
\beta(\alpha) = {1\over 2 \alpha -1}, \, \forall t>0, 
\end{eqnarray*}
for some $C>0$. If (\ref{loja}) holds with $\alpha = {1\over 2}$
then, for some $\delta >0$, 
\begin{eqnarray*}
\| \boldsymbol \gamma(t) - \mathbf a  \|_2 \leq C \, e^{-\delta}, 
\quad \forall t>0.
\end{eqnarray*} 
\end{theorem}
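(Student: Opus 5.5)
We follow the classical argument of Haraux and Jendoubi. First we show that the $\omega$-limit set $\Gamma$ consists of equilibria. Since $\boldsymbol \gamma(t)\in K$ with $K$ compact and $E\in C^2$, the energy identity $\frac{d}{dt}E(\boldsymbol\gamma)=-\|\boldsymbol\gamma_t\|_2^2$ shows that $E(\boldsymbol\gamma(t))$ is nonincreasing and bounded below on $K$. Hence it converges to some $c$, and $\int_0^\infty\|\boldsymbol\gamma_t\|_2^2\,dt<\infty$. Because $\boldsymbol\gamma_t=-\nabla E(\boldsymbol\gamma)$ is Lipschitz in time on $K$ (as $\nabla E$ is $C^1$ and $\boldsymbol\gamma_t$ is bounded), the function $\|\boldsymbol\gamma_t\|_2^2$ is uniformly continuous. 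Barbalat's lemma then gives $\nabla E(\boldsymbol\gamma(t))\to 0$, so by continuity every limit point is an equilibrium: $\Gamma\subset{\cal E}$. Moreover $\Gamma$ is nonempty, compact and connected, being a decreasing intersection of compact connected sets, and $E\equiv c$ on $\Gamma$.

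Next we use (\ref{loja}) to prove that the trajectory has finite length. We may assume $E(\boldsymbol\gamma(t))>c$ for all $t$; otherwise $E$ reaches $c$ in finite time, then $\boldsymbol\gamma_t\equiv0$ from that time on, and the conclusion is trivial. Set $H(t)=(E(\boldsymbol\gamma(t))-c)^{1-\alpha}$. Since ${\rm dist}(\boldsymbol\gamma(t),\Gamma)\to0$, there is $t_0$ with $\boldsymbol\gamma(t)\in\omega$ for $t\ge t_0$. For such $t$,
\[
-\frac{dH}{dt}=(1-\alpha)(E-c)^{-\alpha}\|\nabla E(\boldsymbol\gamma)\|_2^2\ \ge\ \frac{1-\alpha}{M}\|\nabla E(\boldsymbol\gamma)\|_2=\frac{1-\alpha}{M}\|\boldsymbol\gamma_t\|_2 .
\]
Integrating gives $\int_{t}^\infty\|\boldsymbol\gamma_t\|_2\,ds\le \frac{M}{1-\alpha}H(t)$. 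Thus $\boldsymbol\gamma_t\in L^1$, $\boldsymbol\gamma(t)$ is Cauchy, and it converges to some $\mathbf a\in\Gamma\subset{\cal E}\cap E^{-1}(c)$. We also get $\|\boldsymbol\gamma(t)-\mathbf a\|_2\le \frac{M}{1-\alpha}H(t)$.

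For the rates, we write $\phi(t)=E(\boldsymbol\gamma(t))-c$. Then $\phi'=-\|\nabla E\|_2^2\le -M^{-2}\phi^{2\alpha}$. Integrating this differential inequality for $\alpha>1/2$ yields $\phi(t)\le C t^{-1/(2\alpha-1)}$, which is $\beta(\alpha)$. Plugging this into the length bound gives $\|\boldsymbol\gamma(t)-\mathbf a\|_2\le C\phi^{1-\alpha}\le Ct^{-(1-\alpha)/(2\alpha-1)}=Ct^{-\eta(\alpha)}$. Constants are adjusted to cover $t\in(0,t_0]$, where everything is bounded since $K$ is compact. When $\alpha=1/2$ the inequality becomes $\phi'\le -M^{-2}\phi$, so $\phi$ decays exponentially, and hence so does $\|\boldsymbol\gamma(t)-\mathbf a\|_2\le C\phi^{1/2}$. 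This gives the stated estimate, read as $Ce^{-\delta t}$.

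The main obstacle is the step of entering $\omega$. The inequality (\ref{loja}) holds only on the neighbourhood $\omega$ of $\Gamma$, with the single level value $c$. We therefore need (i) that $E$ is constant on $\Gamma$, which follows from the monotone limit of $E$ along the trajectory, and (ii) that the trajectory eventually stays in $\omega$ rather than merely visiting it. Point (ii) is handled by ${\rm dist}(\boldsymbol\gamma(t),\Gamma)\to0$, which holds by compactness of $K$. If $\omega$ were only a neighbourhood of a single point, we would instead use the standard trick of a first exit time from a ball combined with the length estimate to rule out exit.
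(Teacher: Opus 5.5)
Your argument is correct and is the Haraux--Jendoubi proof that the paper itself defers to. The paper gives no proof of its own and only cites \cite{harauxloja, loja3}; you also correctly read the final estimate as $C e^{-\delta t}$. The one step you leave implicit is that the constant $c$ in (\ref{loja}) equals $\lim_{t\to\infty} E(\boldsymbol\gamma(t))$. This follows at once by evaluating (\ref{loja}) at any point of the nonempty set $\Gamma \subset {\cal E} \cap \omega$, where the gradient vanishes.
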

See \cite{harauxloja, loja3} for details.
The number $1-\alpha$ is called the Lojasewicz exponent
in the literature. Large values of $\alpha$ result in slow
convergence. The optimal $\alpha$ is usually estimated
from the local behavior near the limit instead of being a
global exponent.

These results are the basis of relaxation methods often used to
calculate stationary configurations of complicated energies
\cite{prb}. They have inspired a number of convergence results
for descent methods in a variety of optimization
frameworks \cite{absil,gradientloja}.

\subsection{Application to neural network fitting}
\label{sec:application}

In the context of neural network based approximation,
we have the following result.
\vskip 1mm

\begin{theorem} (Network fitting)
Consider a set ${\cal H}$ 
formed  by neural  networks $F_{\boldsymbol \gamma}$ 
governed by parameters 
${\boldsymbol \gamma} \in \Gamma \subset \mathbb R^P$
and constructed selecting analytic activation functions $\sigma$.
Then, the associated gradient flow defined by (\ref{gradient})
converges to a point $\mathbf a \in \mathbb R^P$ such that
$\nabla E(\mathbf a) =0$. Moreover, 
$\| \boldsymbol \gamma(t) - \mathbf a  \|_2$ decays
as $t^{-\eta}$ for some $\eta>0$. This point $\mathbf a$ 
provides the optimal weight and biases for the network.
\end{theorem}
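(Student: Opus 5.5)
The plan is to reduce the statement to the two preceding theorems (Lojasiewicz inequality and convergence of the gradient flow). This requires two things: analyticity of the cost $E$ in (\ref{energy}) as a function of $\boldsymbol \gamma$, and boundedness of the trajectory of (\ref{gradient}).

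\emph{Step 1: analyticity.} For fixed data $\mathbf x_j$, each layer of the network, for instance the ResNet recurrence (\ref{resnet}), is obtained from the previous one by sums, products and compositions. The entries of $\mathbf W^{(\ell)}, \mathbf V^{(\ell)}, \mathbf b^{(\ell)}$ enter polynomially. The activation $\sigma$ is analytic, and I assume the terminal $g$ is analytic too; this hypothesis must be made explicit, e.g.\ $g$ linear. Since compositions and products of real analytic maps are real analytic, induction on $\ell$ shows that $\boldsymbol\gamma \mapsto F_{\boldsymbol \gamma}(\mathbf x_j)$ is real analytic on an open set $U \supset \Gamma$. Then $E$ is a finite sum of squares of analytic functions, hence analytic and in particular $C^2$. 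Consequently (\ref{gradient}) has a unique local solution, and $E(\boldsymbol\gamma(t))$ is nonincreasing, as computed above.

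\emph{Step 2: precompactness and the Lojasiewicz neighbourhood.} Assume the trajectory stays in a compact $K \subset U$. By the first part of the convergence theorem, the $\omega$-limit set $\Gamma_\infty$ is contained in ${\cal E}$, and it is nonempty, compact and connected, being the $\omega$-limit set of a bounded continuous curve. Since $E(\boldsymbol\gamma(t))$ decreases to a limit $c$, we have $E \equiv c$ on $\Gamma_\infty$. Apply the Lojasiewicz inequality at each $\mathbf a \in \Gamma_\infty$, and extract a finite subcover of $\Gamma_\infty$ by the neighbourhoods $\omega_{\mathbf a}$. Taking the largest exponent $\alpha<1$ and the largest constant $M$ then yields (\ref{loja}) on a neighbourhood $\omega$ of $\Gamma_\infty$; this is the first consequence listed after the inequality. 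The convergence theorem now gives a single limit $\mathbf a\in{\cal E}\cap E^{-1}(c)$ with $\|\boldsymbol\gamma(t)-\mathbf a\|_2 \le C t^{-\eta(\alpha)}$ and $\eta(\alpha)=(1-\alpha)/(2\alpha-1)>0$. If $\alpha = 1/2$, the decay is even exponential, which is stronger.

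\emph{Main obstacle: boundedness.} Nothing in $E$ forces the flow to stay bounded. Weights can drift to infinity, for example along directions where saturating activations flatten the cost, and then the theorem as literally stated fails. I would handle this in one of two ways. The first is to state boundedness of $\boldsymbol\gamma(t)$, with $\Gamma$ taken as the compact set in which it remains, as a hypothesis. The second is to add an analytic coercive term $\lambda\|\boldsymbol\gamma\|_2^2$ to $E$. This keeps analyticity and makes sublevel sets compact, so the trajectory stays in $\{E\le E(\boldsymbol\gamma(0))\}$. I would also qualify the final sentence: the limit $\mathbf a$ is a critical point, hence a candidate for the optimal weights. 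It is a global minimizer only if the initialization lies in its basin, which is why in practice one compares the limits obtained from several initial guesses and keeps the one of lowest cost.
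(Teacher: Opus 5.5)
Your proposal follows the paper's route exactly: the paper's proof is just the one-line remark that the theorem is an immediate consequence of the Lojasiewicz inequality and the convergence theorem of Section~\ref{sec:loja}, and your Steps 1--2 carry out that reduction explicitly. Your caveats are correct, and they are precisely the hypotheses that one-liner uses without stating them: the convergence theorem requires $\boldsymbol\gamma(t)$ to stay in a compact $K\subset U$, which really can fail (for $F_{\boldsymbol\gamma}(x)=v\,\sigma(wx+b)$ with logistic $\sigma$ and data $(1,1),(-1,0)$, the identity $\partial_bE-\partial_wE=2v^2\sigma(b-w)\sigma'(b-w)$ forces $v=0$ at a critical point, but there $\partial_vE=-\sigma(w+b)<0$, so ${\cal E}=\emptyset$ and every trajectory is unbounded), $g$ must be analytic (it is linear in the paper's examples), and $\mathbf a$ is only a critical point, not necessarily an optimal one.
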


{\bf Proof.} It is an immediate consequence of Lojasewicz
results for analytic gradient flows stated in Section 3.1.\\

Typical choices for activation functions, such as the
logistic sigmoid, the hyperbolic tangent, the soft plus and
leaky soft plus functions fulfills these conditions. Therefore,
we can calculate the weights of neural networks constructed
with them by solving gradient systems with standard
solvers for systems of ordinary differential equations. 
Low order adaptive solvers provide efficient tools to this
purpose. 

The decay exponent $\eta$ is specified in the convergence
theorem in terms of the Lojasiewicz exponent $\alpha$.
A criterion to determine $\alpha$, given an analytic function,
is stated in \cite{gwod}. Given an analytic function
$E: U \longrightarrow \mathbb R$ defined in an open set 
$U \subset \mathbb R^P$ and a vector $\mathbf v \in \mathbb R^P$,
we define the polar curve
\begin{eqnarray*}
\Gamma_{\mathbf v} = \{  \mathbf u \in U \, : \, \exists \lambda \in \mathbb R
\big| \nabla E(\mathbf u) = \lambda \mathbf v \} = \nabla E^{-1} 
(\mathbb R \mathbf v).
\end{eqnarray*}
In particular, for the standard canonical basis $\mathbf e_1, \ldots,
\mathbf e_n$, we denote by $\Gamma_{\mathbf e_1}, \ldots, 
\Gamma_{\mathbf e_n}$ the corresponding polar curves.
If $U$ is a neighborhood of $\mathbf 0$, we define $\nu(f)$ as
the largest integer $k$ such that $t^{-k}E(t)$ is bounded
near zero.  
\vskip 1mm

\begin{theorem} (Exponent value)
Consider an analytic function
$E: U \longrightarrow \mathbb R$ defined in a neighborhood  $U$
of $0 \in \mathbb R^P$. Assume that $E^{-1}(\mathbf 0)= \{\mathbf 0\}$.
The Lojasiewicz exponent $\alpha$ for $E$ is
$\alpha = {\nu(\nabla E \circ \gamma_j) \over \nu(E \circ \gamma_j)}$
for some $\gamma_j: (-1,1) \rightarrow \Gamma_{\mathbf e_j}$, 
$\gamma(0)=0$, $j=1,\ldots,P$.
\end{theorem}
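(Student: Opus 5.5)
This statement is essentially the Gwoździewicz characterization cited as \cite{gwod}. I would prove it by comparing the two sides of (\ref{loja}) along analytic arcs, using the curve selection lemma and the polar curves $\Gamma_{\mathbf e_j}$. Without loss of generality $E(\mathbf 0)=0$ and $\mathbf 0$ is an isolated zero, so $\mathbf 0$ is also the only critical point near the origin on $E^{-1}(0)$. The Lojasiewicz exponent is the infimum of those $\alpha$ for which (\ref{loja}) holds near $\mathbf 0$. Equivalently, it is the supremum over points $\mathbf u\to\mathbf 0$ of the ``local ratio'' $\log\|\nabla E(\mathbf u)\|/\log|E(\mathbf u)|$.

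\textbf{Step 1 (reduction to arcs).} I would show that this infimum is attained along analytic arcs. Consider the semianalytic set of pairs $(r,s)$ with $r=|E(\mathbf u)|$ and $s=\min\{\|\nabla E(\mathbf v)\|: |E(\mathbf v)|=r\}$. By the curve selection lemma and Puiseux expansions, $s\sim c\,r^{\alpha}$ for a rational $\alpha$. This $\alpha$ is the optimal exponent, and it is realized along some analytic arc $\gamma:(-1,1)\to U$ with $\gamma(0)=\mathbf 0$. Along that arc $\alpha=\nu(\nabla E\circ\gamma)/\nu(E\circ\gamma)$, since $\nu$ records the order of vanishing of the Puiseux series.

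\textbf{Step 2 (localizing to polar curves).} This is the main obstacle: showing that the extremal arcs can be chosen inside one of the polar curves $\Gamma_{\mathbf e_j}$. I would use a Lagrange-multiplier argument. Points minimizing $\|\nabla E\|^2$ on a level set $\{E=r\}$ satisfy $\nabla(\|\nabla E\|^2)=\mu\nabla E$, so they lie on a generalized polar set, but not directly on a coordinate polar curve. To get coordinate polar curves, I would replace the Euclidean norm by the max-norm $\|\nabla E\|_\infty=\max_j|\partial_j E|$. This is equivalent to the Euclidean norm up to a constant, so the exponent is unchanged. On the open set where $|\partial_j E|$ is the largest component, I would compare $|\partial_jE|$ with $E$ along arcs. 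I would then apply curve selection to the set of minimizers of $|\partial_j E|$ on level sets of $E$ restricted to the region $\{\partial_iE=0,\ i\neq j\}$, that is, to $\Gamma_{\mathbf e_j}$.

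The key claim is that, for each $r$, some point of $\{E=r\}$ minimizing $\|\nabla E\|_\infty$ can be pushed onto some $\Gamma_{\mathbf e_j}$. I would try to prove it by a Rolle-type argument. If $\partial_iE\neq0$ for some $i\neq j$, flowing along the level set in a direction that decreases $|\partial_iE|$ while keeping $|\partial_iE|\le|\partial_jE|$ stays in the sublevel region for the max-norm. This would yield, via curve selection, an analytic arc $\gamma_j\subset\Gamma_{\mathbf e_j}$ on which $\nu(\nabla E\circ\gamma_j)/\nu(E\circ\gamma_j)\ge\alpha$.

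\textbf{Step 3 (reverse inequality).} Along any arc $\gamma$ through $\mathbf 0$, the inequality (\ref{loja}) gives $|E\circ\gamma|^\alpha\le M\|\nabla E\circ\gamma\|$. Comparing orders yields $\alpha\,\nu(E\circ\gamma)\ge\nu(\nabla E\circ\gamma)$. Hence the ratio along every arc, in particular along each $\gamma_j$, is at most $\alpha$. Combined with Step 2, $\alpha$ equals the ratio along the arc $\gamma_j$ selected there. I would also check separately the finitely many branches of each $\Gamma_{\mathbf e_j}$, which are one-dimensional near $\mathbf 0$ generically; the maximum over $j$ and over branches gives the exponent.
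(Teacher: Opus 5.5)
The paper gives no argument of its own for this statement; it only refers to Gwo\'zdziewicz's work. Your Steps 1 and 3 are sound. Step 2, however, carries the whole content of the theorem, and it does not work.

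\textbf{The key claim in Step 2 is false.} You claim that a minimizer of $\|\nabla E\|_\infty$ on $\{E=r\}$ can be pushed onto some $\Gamma_{\mathbf e_j}$. Take $P=2$, $u=(x+y)/\sqrt2$, $v=(x-y)/\sqrt2$ and $E=u^2+v^4$. Then $\partial_xE=\sqrt2(u+2v^3)$ and $\partial_yE=\sqrt2(u-2v^3)$, so $\|\nabla E\|_\infty=\sqrt2(|u|+2|v|^3)$. Its only minimizers on $\{E=r\}$ are $(u,v)=(0,\pm r^{1/4})$. There $|\partial_xE|=|\partial_yE|\neq0$, so they lie on neither $\Gamma_{\mathbf e_1}=\{u=2v^3\}$ nor $\Gamma_{\mathbf e_2}=\{u=-2v^3\}$. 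Since they are strict isolated minimizers, there is nothing to ``flow''.

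The flow argument fails in general for the same reasons:
\begin{itemize}
\item any motion away from a minimizer along the level set can only increase $\|\nabla E\|_\infty$;
\item decreasing $|\partial_iE|$ gives no control on $|\partial_jE|$;
\item minimizing a maximum of several functions typically produces ties between components, not the vanishing of all but one.
\end{itemize}
Restricting the minimization to $\{\partial_iE=0,\ i\neq j\}$ simply assumes the conclusion. The theorem does hold for this $E$: the ratio along $u=2v^3$ is $3/4$, which is optimal. But that polar arc is not made of minimizers, so a different mechanism is needed.

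\textbf{The missing idea.} Put the max-norm on the point rather than on the gradient, and pass through the distance exponent $\mathcal L=\inf\{\theta:\ |E(\mathbf x)|\ge C\|\mathbf x\|^\theta \mbox{ near } \mathbf 0\}$. This is finite precisely because $E^{-1}(\mathbf 0)=\{\mathbf 0\}$.
\begin{itemize}
\item For small $r>0$, the maximum of $\|\mathbf x\|_\infty$ on the compact set $\{|E|\le r\}$ near $\mathbf 0$ is attained at a point of $\{|E|=r\}$ maximizing some $|x_j|$.
\item At that point $\nabla E\neq\mathbf 0$, and Lagrange multipliers give $\nabla E\in\mathbb R\mathbf e_j$. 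So the point lies on $\Gamma_{\mathbf e_j}$.
\item Curve selection then gives an arc $\gamma\subset\Gamma_{\mathbf e_j}$ with $\nu(\gamma_j)=\nu(\gamma)$ and $\nu(E\circ\gamma)=\mathcal L\,\nu(\gamma)$.
\item Write $\nabla E\circ\gamma=\lambda\mathbf e_j$. The identity $(E\circ\gamma)'=\lambda\gamma_j'$ yields $\nu(E\circ\gamma)=\nu(\nabla E\circ\gamma)+\nu(\gamma)$, so along this arc the ratio equals exactly $1-1/\mathcal L$.
\item Conversely, along any arc, $(E\circ\gamma)'=\langle\nabla E\circ\gamma,\gamma'\rangle$ gives $\nu(E\circ\gamma)\ge\nu(\nabla E\circ\gamma)+\nu(\gamma)$. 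Together with $\nu(E\circ\gamma)\le\mathcal L\,\nu(\gamma)$, this bounds every ratio by $1-1/\mathcal L$.
\end{itemize}
Combined with your Steps 1 and 3, this gives $\alpha=1-1/\mathcal L$, attained on a branch of a coordinate polar curve.

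This is also where the hypothesis $E^{-1}(\mathbf 0)=\{\mathbf 0\}$ genuinely enters: it gives compactness of the sublevel sets and finiteness of $\mathcal L$. In your plan the hypothesis plays essentially no role, which is a symptom of the gap.
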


See \cite{gwod} for details.
This Theorem, however, does not specify which $\gamma_j$ works.

\begin{figure}[!htb] \centering
\includegraphics[width=10cm]{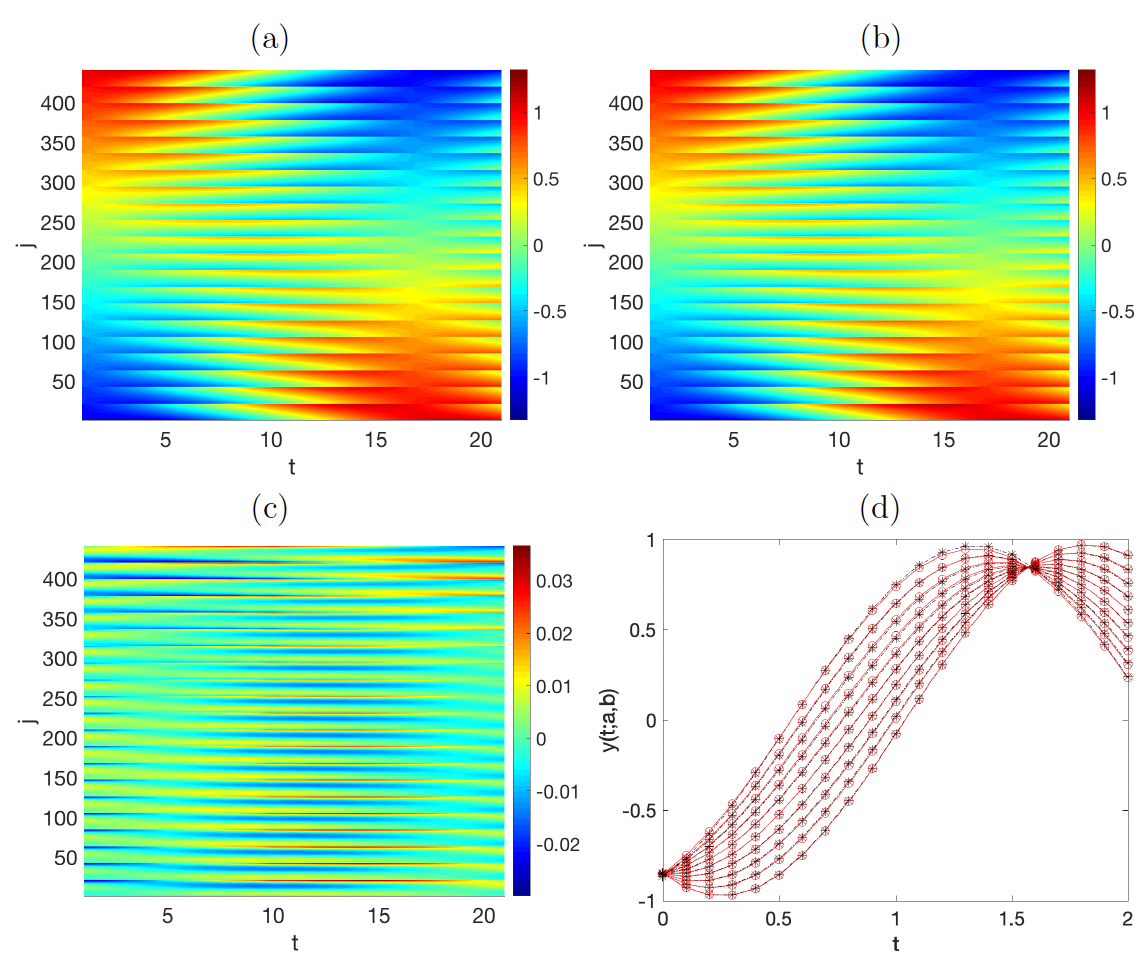}
\caption{(a) exact solutions $y(t,p_j)$, $j=1,\ldots,441$ versus (b) approximations
obtained with the trained Resnet and (c) error. Panel (d) compares the Resnet approximant (blue asterisks) and the exact solutions $y(t;a_r',b_s')$, $a_r'=-0.85$, $b_s'=-0.95+ (s-1)0.2$, $s=1,\ldots,10$ (red circles) at points outside the training grid.}
\label{fig1}
\end{figure}


\section{Numerical tests}
\label{sec:results}

Let us illustrate the method with examples in function
approximation involving differential equations.

\subsection{Parametric solutions of ordinary 
differential equations}
\label{sec:forward}

Consider the second order differential equation
\begin{eqnarray*}
y''(t) + k y(t) = 0, \quad t \in [0,2] \\
y(0) = a, \; y'(0) = b, 
\end{eqnarray*}
for $(a,b) \in [-1,1]$. We wish to approximate the
solution $y(t;a,b)$, which depends continuously on
$a$ and $b$. We  fix $k=4$ and define a mesh
$t_\ell=(\ell-1) \, \delta t$, $\delta t =0.1$, $\ell=1,\ldots,21$. 
Then, we define the function
\begin{eqnarray*}
\begin{array}{llll}
F: & \mathbb R^2 & \rightarrow & \mathbb R^{21} \\
    & (a,b)  &  \rightarrow & (y(t_\ell;a,b))_{\ell=1}^{21}
\end{array}
\end{eqnarray*}
and seek a residual neural network approximant. To
construct it, we use data for a grid $(a_r,b_s)$ with step 
$da=db=0.1$, $a_r=-1+ (r-1) \, da$, $b_s=-1+(s-1) \, db$, 
$r,s=1, \ldots, 21$, that is, we assume we know the values
$y(t_\ell;a_r,b_s)$ for $r,s,\ell$. In fact, there are
well known explicit expressions for the solutions of this
problem
\begin{eqnarray*}
y(t;a,b) =  a cos(2t) + b/2 sin(2t), \quad t>0.
\end{eqnarray*}
Relabelling $x_{rs}=(a_r,b_s)$ with a 
single index $j=S (r-1) + s$, we get the data 
$F(x_j) =  y_j$, $j=1,\ldots,J$,
$J=441$, $S=21$. We seek a ResNet given by (\ref{resnet})
with $N=2$, $M=21$, using for instance $L=1$ layer
and $q=5$ units, with sigmoidal activation function
$\sigma$. We characterize $g: \mathbb R^N \rightarrow
\mathbb R^M$ by a matrix 
$\mathbf G \in {\cal M}_{MxN}(\mathbb R)$. The parameters
$\boldsymbol \gamma$ to be identified are the coefficients
of $\mathbf G$, $\mathbf W^{(1)}$, $\mathbf V^{(1)}$ and 
$\mathbf b^{(1)}$. We consider the cost (\ref{energy})
and solve the gradient system (\ref{gradient}). For
the starting point, we generate a collection or random values
and select the proposal with smallest cost. The gradient
system converges quite fast to an equilibrium, providing
all the coefficients of the neural network.
The system was solved with MATLAB routine ode23 and the 
cost consistently decreased. We stopped the evolution when
variations in the cost fell below a tolerance. It took about 5 
minutes on a common laptop. Figure \ref{fig1} illustrates
the performance of the calculated approximant on the
training mesh and outside it, but remaining in the compact 
set $[-1,1]^2$.
%

\subsection{Inversion problems constrained by partial differential 
equations}
\label{sec:inverse}

Consider a wave equation
\begin{eqnarray*} \begin{array}{ll}
u_{tt} - c^2 u_{xx} = 0, & x \in \mathbb R, t > 0,      \\[1ex]
u(0,x) = h \exp(-10 w \, (x-1)), & x \in \mathbb R,   \\[1ex]
u_t(0,x) = 0, & x \in \mathbb R, 
\end{array} \label{wave} \end{eqnarray*}
with parameters $h \in [1,2]$, $w \in [1,2] $, $c \in [0.1, 2]$.
The forward problem consists in finding the solution $u(x,t)$ of this equation 
given parameters $(h,w,c)$. It has a well known explicit expression
\begin{eqnarray*}
u(t,x;h,w,c) = {u(0,x+ct;h,w,c) + u(0,x-ct;h,w,c) \over 2}, \quad
x \in \mathbb R, t > 0.
\end{eqnarray*}
The inverse problem consists in finding the parameters $(h,w,c)$ given 
the solution $u$. In practice,
one considers finite dimensional approximations of $u$, or just values
$d_{r,\ell}$, $r=1,\ldots,R$, $\ell=1,\ldots,{\cal L}$, at a finite collection of 
points. We wish to approximate a function $f: \mathbb R^{R\times S}
\rightarrow \mathbb R^3$.  Without considering the well posedness of
this type of inverse problems, we just see if we can construct a residual 
neural network approximant.

We introduce a mesh
$t_\ell=(\ell-1) \, \delta t$, $\delta t =0.1$, $\ell=1,\ldots,21$,
and consider the values at $x_0=0$, $x_2=2$.
Then, we define the function
\begin{eqnarray*}
\begin{array}{llll}
F: & \mathbb R^{42} & \rightarrow & \mathbb R^{3} \\
    & \mathbf u= (u(x_0,t_\ell), u(x_1,t_\ell))_{\ell=1}^{\cal L}  &  
    \rightarrow & (h(\mathbf u),w(\mathbf u),c(\mathbf u))
\end{array}
\end{eqnarray*}
and seek a RNN approximant.

\begin{figure}[!htb] \centering
\includegraphics[width=10cm]{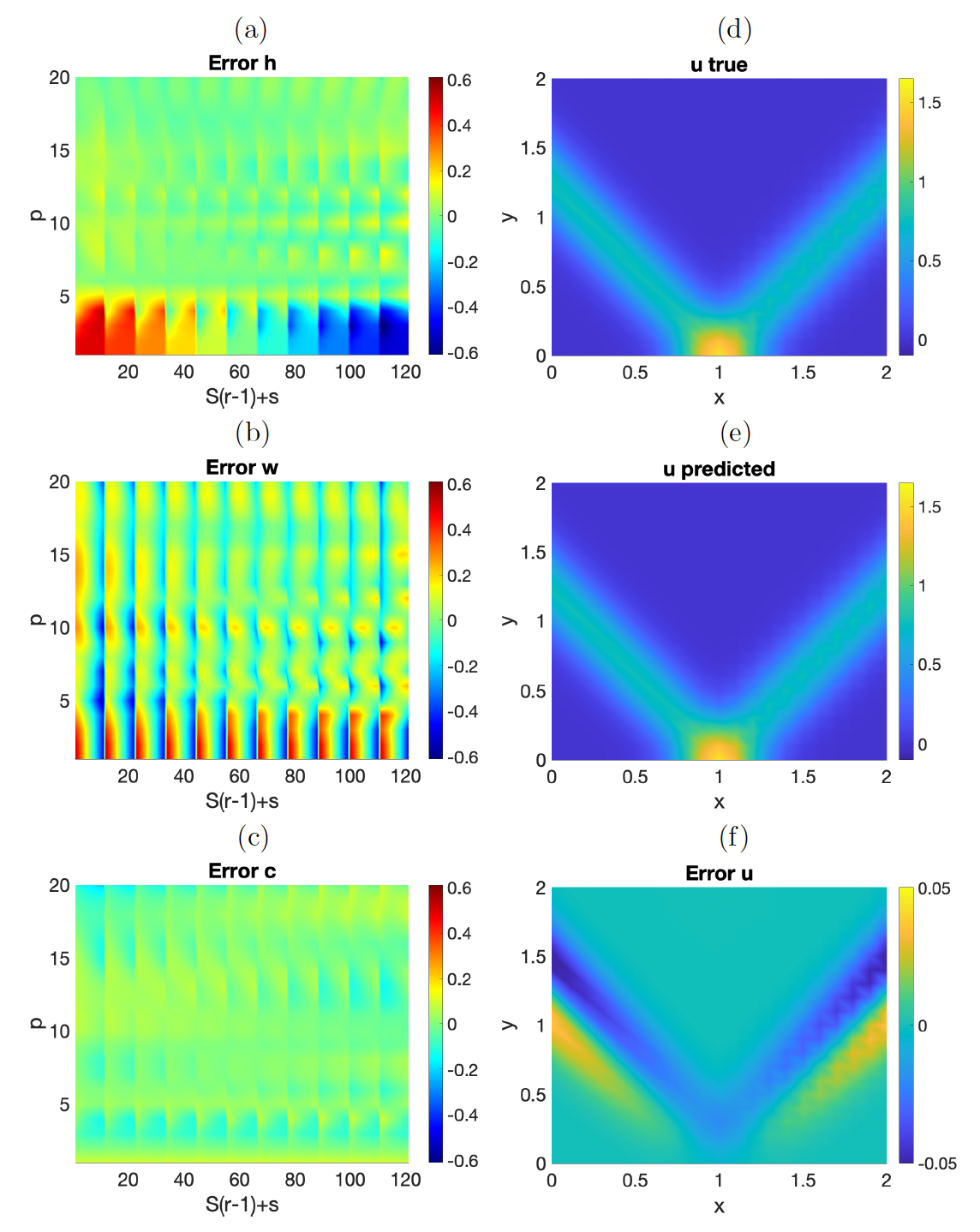}
\caption{
(a)-(c) Error in the true and predicted values of $(h,w,c)$ in the grid $c_p=0.1+
0.1(p-1)$, $p=1,\ldots,20$,  $h_r=0.1(r-1)$, $w_s=0.1(s-1)$, $r,s=1,\ldots,10$.
While velocities $c$ are uniformly well approximated, the approximation of the height 
$h$ and the width $w$ looses quality for small $c$.
(d)-(e) True solution for $h=w=1.55$ and $c=0.8$ versus predicted solution using
the values of $h=1.54,w=1.59,c=0.81$ predicted by the fitted RNN for the 
corresponding data at $0$ and $2$, and absolute error. }
\label{fig2}
\end{figure}

\begin{figure}[!htb] \centering
\includegraphics[width=10cm]{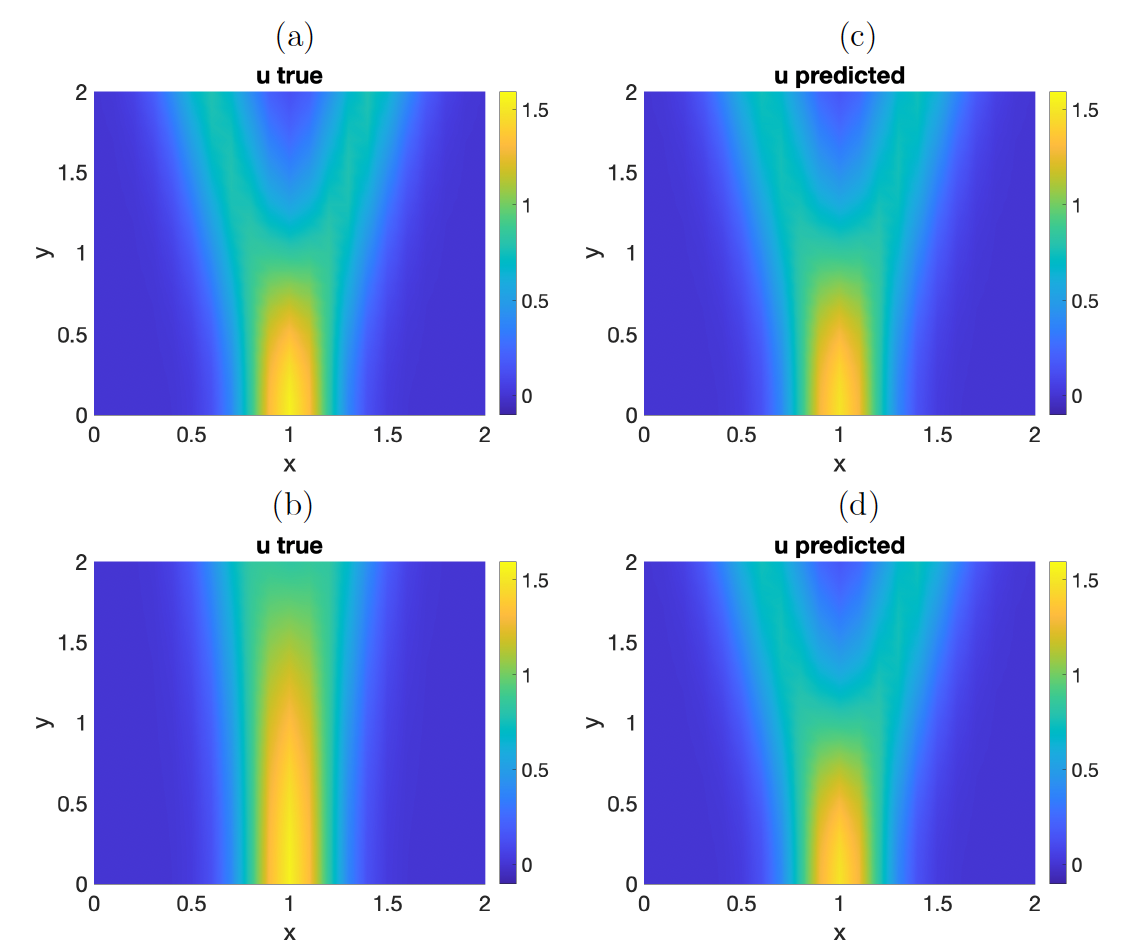} 
\caption{True wave patterns for (a) $c=0.2$,$h=w=1.55$ and (b) $c=0.1$,$h=w=1.55$.
Predicted wave patterns for the predicted (c) $h=1.5381,w=1.4918,c=0.1935$ and
(d) $h=1.394,w=1.4926,c=0.1924$.}
\label{fig3}
\end{figure}

To train the network we generate data on a grid of step $0.1$ in 
$[1,2]^2 \times [0.1,2]$. We vary $(h,w,c)$ in the grid and calculate 
\begin{eqnarray}
\mathbf u_{r,s,p}=(u(x_0,t_\ell;h_r,w_s,c_p), 
u(x_1,t_\ell,h_r,w_s,c_p))_{\ell=1}^{\cal L}, \quad
x_0=0, x_1=2.
\label{datainv}
\end{eqnarray}
Relabelling  with a single index $j=PS(r-1)+ P(s-1)+ p$, 
we get the  data $F(\mathbf u_j)= \mathbf y_j$, 
$j=1,\ldots,J$, $J=RSP=2420$, $R=S=11$, $P=20$.
We seek a ResNet given by (\ref{resnet})
with $N=42$, $M=3$, using for instance $L=1$ layer
and $q=5$ units, with sigmoidal activation function
$\sigma$. Again, we characterize $g: \mathbb R^N \rightarrow
\mathbb R^M$ by a matrix 
$\mathbf G \in {\cal M}_{MxN}(\mathbb R)$. The parameters
$\boldsymbol \gamma$ to be identified are the coefficients
of $\mathbf G$, $\mathbf W^{(1)}$, $\mathbf V^{(1)}$ and 
$\mathbf b^{(1)}$. We solve the gradient system (\ref{gradient})
for the cost (\ref{energy}). To initialize, we generate a collection 
of random values and select the proposal with smallest cost. 
The gradient system converges to an equilibrium, providing all the 
coefficients of the network.

Figure \ref{fig2} illustrates the performance of the calculated 
approximant.  Predictions worsen for low values of $c$. 
Notice that, for low values of $c$, the wave pattern is different,
compare Figure \ref{fig3}(a)-(b) to Figure \ref{fig2}(d). The
wave does not reach $x=0$ and $x=2$ in the given time
$T=2$.
If we restrict our RNN fitting to the range of velocities $c$ for which 
the wave pattern reaches $x_0=0$ and $x_1=2$ for $t \leq 2$, 
then the wave predictions are reasonable.
However, variations in the signal (\ref{datainv}) collected at $x=0,2$ 
for smaller $c$ are not expected to suffice to distinguish $c,h,w$ in 
the given time interval $[0,T]$.
In spite of that, our RNN approximant produces a prediction, as
seen in panels (c) and (d) of figure \ref{fig3}. The prediction is
noticeably worse for $c=0.1$ because the pattern has a different
shape, but it is still reasonable for $c=0.2$ even if the signal recorded
for that value contain no information.

%

\section{Dataset reduction}
\label{sec:latin}

\begin{figure}[!htb] \centering
\includegraphics[width=10cm]{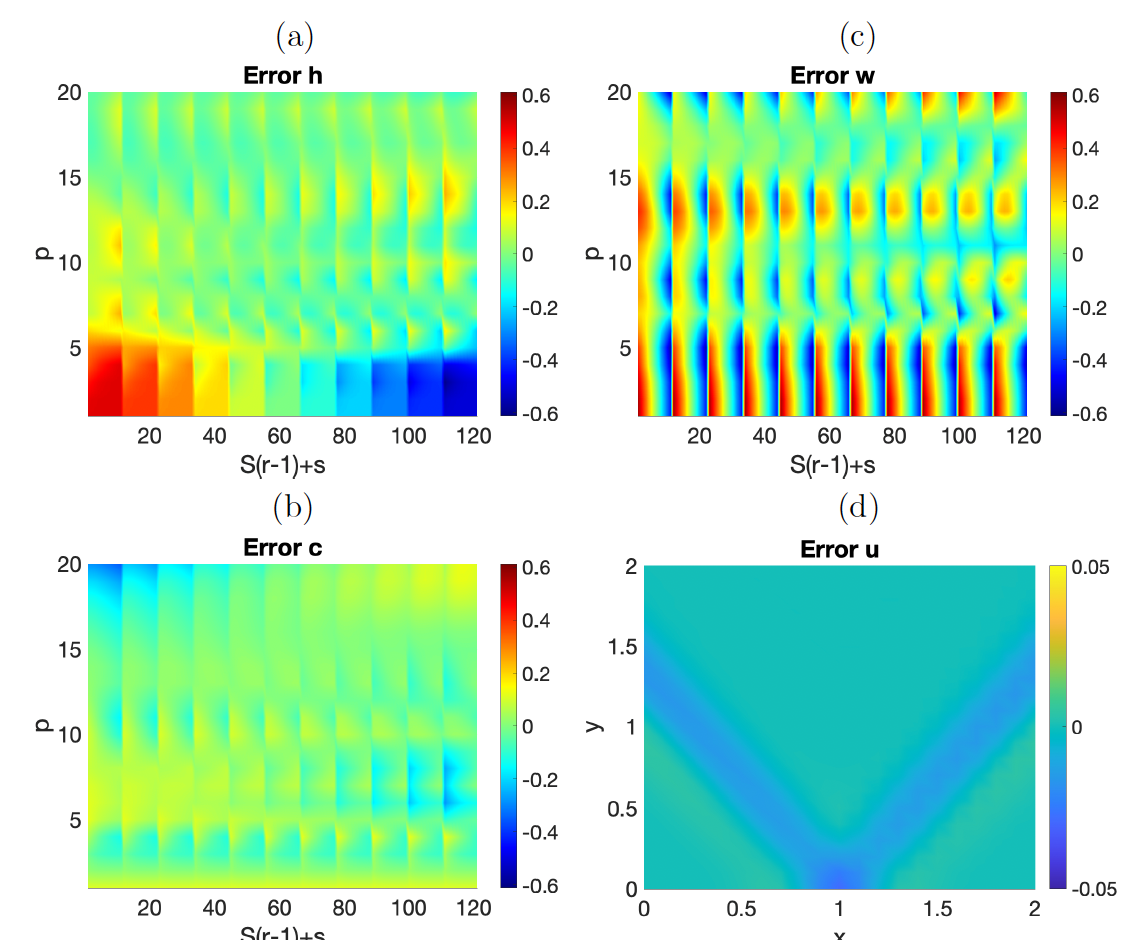}
\caption{
(a)-(c) Error in the true and predicted values of $(h,w,c)$ in the grid $c_p=0.1+
0.1(p-1)$, $p=1,\ldots,20$,  $h_r=0.1(r-1)$, $w_s=0.1(s-1)$, $r,s=1,\ldots,10$.
(d) Absolute error when comparing the true solution for $h=w=1.55$ and $c=0.8$ with
the predicted solution using the values of $h=1.52,w=1.56,c=0.8$ given by the 
RNN fitted with half the number of nodes, chosen by latin hypercube sampling. }
\label{fig4}
\end{figure}

The previous section trains networks using data generated in equispaced
parameter meshes. As the parameter range and the number of parameters 
increase, the amount of data grows fast and training networks becomes 
more costly.
However, the information contained in the data may be redundant. Latin
hypercube sampling (LHS) of the parameter space furnishes an strategy to 
reduce the amount of data while preserving the quality \cite{lhs}.

A square grid is a Latin square if and only if there is only one sample in each 
column and each row. Latin hypercubes generalize this concept to an arbitrary 
number of dimensions: each sample is the only sample in each axis-aligned 
hyperplane containing it. 
One must decide the number of sample points to use and remember in which 
row and column each sample point was taken.
When sampling functions of $N$variables,  we divide the range of each variable 
into $M$ equally probable intervals. Then, we distribute $M$ sample points 
satisfying the Latin hypercube requirements. The number of divisions $M$ 
must be equal for each variable. In this way, we do not require more samples 
for more parameter dimensions.

Figure \ref{fig4} is the counterpart of \ref{fig2} using $1210$ data points
instead of $2420$. A similar cost value is reached three times faster.
For the particular values of $(h,w,c)$  chosen, the approximation of the
solution is even better. However, panels (a)-(c) show that the overall
approximation for general values of $(h,w,c)$ can be worse. Notice that
the cost functions are different since we consider less points. We can 
reduce further the cost values and get approximations comparable to
 those in Figure \ref{fig3}(a)-(c) in a similar number of steps but in two 
 thirds of the time.

\section{Conclusions}
\label{sec:conclusions}
We propose a strategy to train neural networks involving
analytic activation functions based on gradient flows.
Convergence to a solution is ensured by Lojasiewicz theory.
The main advantage is simplicity of implementation.
We  illustrate the procedure training residual neural networks
for simple parametric problems:
an ordinary differential equation and an inversion problem 
defined  by a wave equation, both involving a few parameters.
Residual neural networks are powerful approximants even
for small number of nodes. Simple
codes to solve gradient systems of ordinary differential
equations allow us to fit the coefficients of the networks
yielding reasonable approximants in the tests considered.
Latin hypercube sampling can help to reduce the size.
Interestingly, the neural network trained for the inverse problem
provides predictions of the parameters governing the wave
even for ranges where the data contain basically no information
to invert, which constitutes a remarkable regularizing effect.
In general, the quality of the approximant worsens when
different patterns appear as we vary the parametric regime.
Otherwise it appears to be quite uniform.

\section*{Acknowledgements.} This research was partially supported by the 
FEDER/ MICINN-AEI grants PID2020-112796RB-C21 and  and
PID2024-155528OB-C21.

\end{document}